\title{}
\author{}
\newtheorem{theorem}{Theorem}
\newtheorem{lemma}[theorem]{Lemma}
\newtheorem{definition}{Definition}
\newtheorem{megaclaim}{Claim}
\newcommand{\claim}[2]{\begin{megaclaim}\label{#1} #2 \end{megaclaim}}
\newcommand{\refclaim}[1]{Claim~\ref{#1}}
\newcommand{\PP}{\mathcal{P}}
\newcommand{\zX}{\mathcal{X}}
\newcommand{\zY}{\mathcal{Y}}
\title{A structure theorem for strong immersions}
\author{Zden\v ek Dvo\v r{\'a}k\thanks{Computer Science Institute of Charles University, Prague, Czech Republic.
E-mail: \texttt{rakdver@iuuk.mff.cuni.cz}.  Supported the Center of Excellence -- Inst. for Theor. Comp. Sci., Prague (project P202/12/G061 of Czech Science Foundation), and
by project LH12095 (New combinatorial algorithms - decompositions, parameterization, efficient solutions) of Czech Ministry of Education.
}\and Paul Wollan\thanks{
Department of Computer Science, University of Rome, ``La Sapienza", Rome, Italy \texttt{wollan@di.uniroma1.it}. 
Supported by the European Research Council under the European Union's Seventh Framework Programme (FP7/2007-2013)/ERC Grant Agreement no. 279558.}
\\
}
\begin{document}

\maketitle

\begin{abstract}
A graph $H$ is \emph{strongly immersed} in $G$ if $H$ is obtained from $G$ by a sequence of vertex splittings (i.e., lifting some pairs
of incident edges and removing the vertex) and edge removals.
Equivalently, vertices of $H$ are mapped to distinct vertices of $G$ (\emph{branch vertices}) and edges of $H$ are mapped to pairwise edge-disjoint
paths in $G$, each of them joining the branch vertices corresponding to the ends of the edge and not containing any other branch vertices.
We describe the structure of graphs avoiding a fixed graph as a strong immersion.
\end{abstract}

\section{Introduction}\label{sec:intro}

In this paper, we consider graphs which can have parallel edges and loops, where each loop contributes $2$ to the degree of the incident vertex.  A graph without parallel edges and loops is called \emph{simple}.

Various containment relations between graphs have been studied in structural graph theory.  The best known ones are perhaps \emph{minors} and \emph{topological minors}.
A graph $H$ is a \emph{minor} of $G$ if it can be obtained from $G$ by a sequence of edge and vertex removals and edge contractions.
A graph $H$ is a \emph{topological minor} of $G$ if a subdivision of $H$ is a subgraph of $G$, or equivalently, if $H$ can be obtained from $G$
by a sequence of edge and vertex removals and by suppressions of vertices of degree two.  In their fundamental series of papers, Robertson and Seymour developed
the theory of graphs avoiding a fixed minor, giving a description of their structure~\cite{robertson2003graph} and proving that every proper minor-closed class
of graphs is characterized by a finite set of forbidden minors~\cite{rs20}.  The topological minor relation is somewhat harder to deal with (and in particular,
there exist proper topological minor-closed classes that are not characterized by a finite set of forbidden topological minors), but a description of their structure
is also available~\cite{gmarx,topmin}.

In this paper, we consider the related notion of a graph immersion. Let $H$ and $G$ be graphs.  
An \emph{immersion} of $H$ in $G$ is a function $\theta$ from vertices and edges of $H$ such that
\begin{itemize}
\item $\theta(v)$ is a vertex of $G$ for each $v\in V(H)$, and $\theta\restriction V(H)$ is injective.
\item $\theta(e)$ is a connected subgraph of $G$ for each $e\in E(H)$, and if $f\in E(H)$ is distinct from $e$, then
$\theta(e)$ and $\theta(f)$ are edge-disjoint.
\item If $e\in E(H)$ is incident with $v\in V(H)$, then $\theta(v)$ is a vertex of $\theta(e)$,
and if $e$ is a loop, then $\theta(e)$ contains a cycle passing through $\theta(v)$.
\end{itemize}
An immersion $\theta$ is \emph{strong} if it additionally satisfies the following condition:
\begin{itemize}
\item If $e\in E(H)$ is not incident with $v\in V(H)$, then $\theta(e)$ does not contain $\theta(v)$.
\end{itemize}
When we want to emphasize that an immersion does not have to be strong, we call it \emph{weak}.

If $H$ is a topological minor of $G$, then $H$ is also strongly immersed in $G$.  On the other hand,
an appearance of $H$ as a minor does not imply an immersion of $H$, and conversely, an appearance of $H$ as a strong immersion
does not imply the appearance as a minor or a topological minor.  Nevertheless, many of the results for minors and topological minors
have analogues for immersions and strong immersions.  For example, any simple graph with minimum degree at least $200k$ contains
a strong immersion of the complete graph $K_k$ (DeVos et al.~\cite{mindim}), as compared to similar sufficient minimum degree conditions
for minors ($\Omega(k\sqrt{\log k})$, Kostochka~\cite{kostomindeg}, Thomason~\cite{thommindeg}) and topological minors ($\Omega(k^2)$,
Bollob\'as and Thomason~\cite{BoTh}, Koml\'os and Szemer\'edi \cite{KoSz}).
Furthermore, every proper class of graphs closed on weak immersions is characterized by a finite set of forbidden immersions~\cite{rs23}.

Let us restrict our attention for the moment to weak immersions.  Fix $t$ to be a positive integer.  It is easy to show that if a graph $G$ contains a set $X$ of $t+1$ vertices such that for every pair of vertices $x, y \in X$ there does not exist an edge cut of order less than $t^2$ serparating $x$ and $y$, then $G$ contains $K_t$ as a weak immersion.  From this observation, we see that any graph which does not contain $K_t$ as a weak immersion must either have a small number of large degree vertices, or alternatively, there exists a small edge cut separating two big degree vertices.  This gives rise to an easy structure theorem for weak immersions as shown in DeVos et al.~\cite{mattpriv} and Wollan~\cite{wollims}.  

The same is not true for strong immersions.  There exist graphs which are arbitrarily highly edge connected and still have no strong immersion of $K_3$ (although such graphs will necessarily not be simple by the extremal result of \cite{mindim} mentioned above).  As an example, let $k$ be a positive integer and consider the graph $P_k$ obtained from from a path of length $k$ by adding $k-1$ parallel edges to each edge of the path.  Then $P_k$ is $k$ edge connected but does not contain even $K_3$ as a strong immersion.  The example can be made more complex as well.  If we add edges to connect every pair of vertices at distance two on the path, the resulting graph will still have no strong immersion of $K_4$.  When the graph is assumed to be highly edge connected, this is essentially the only obstruction to a graph excluding a strong immersion of a fixed clique as shown by Marx and Wollan \cite{MW}.

The main result of this article is a decomposition theorem for strong immersions.  The basis is a theorem which says that if a graph avoids a strong immersion of a fixed clique and contains a large set $X$ of vertices which are pairwise highly edge
connected, then the graph must have a decomposition with respect to $X$ yielding an obstruction to the existence of a strong clique immersion,
similar in spirit to the construction of the highly edge connected graph avoiding a strong clique immersion described in the previous paragraph.
From this decomposition theorem, it is straightforward to derive a structure theorem for graphs excluding a fixed graph as a strong immersion.  

We first rigorously define the decomposition which arises.  
A \emph{near-partition} of a set $Z$ is a family of subsets $Z_1, \dots, Z_k$, possibly empty, such that $\bigcup_1^k Z_i = Z$ and $Z_i \cap Z_j = \emptyset$ for all $1 \le i < j \le k$.  
Let $X$ be a subset of vertices of a graph $G$.  A \emph{path-like decomposition $\PP$ of $G$ with respect to $X$} is an ordering
$x_1$, $x_2$, \ldots, $x_t$ of the vertices of $X$ and a near-partition $B_0$, \ldots, $B_t$ of $V(G)\setminus X$.  The elements of the near-partition
are called \emph{bags} of the decomposition.  For a vertex $x_i\in X$,
the \emph{$x_i$-cut} of the decomposition is the set of edges of $G$ with one endpoint in $\{x_1,\ldots, x_{i-1}\}\cup B_0\cup \ldots \cup B_{i-1}$
and the other endpoint in $\{x_{i+1},\ldots, x_t\}\cup B_i\cup \ldots \cup B_t$.  The \emph{width} of the decomposition is the maximum size
of an $x$-cut with $x\in X$.  For a set $Z\subseteq V(G)$, let $b_{\PP}(Z)$ denote the number of bags of the decomposition that intersect $Z$.
For an integer $p\ge 0$, we say that $Z$ is \emph{$p$-bounded} in the decomposition $\PP$ when $|X\cap Z|+b_{\PP}(Z)\le p$.
For an integer $k\ge 1$, we say that a set $W\subseteq V(G)$ is \emph{$k$-edge-connected} if no two vertices of $W$ are separated by an edge-cut of size less than $k$ in $G$.
A set $W\subseteq V(G)$ is \emph{$\alpha$-linear} if there exists a set $A\subseteq W$ of size at most $\alpha$
such that $G-A$ has a path-like decomposition $\PP$ with respect to $W\setminus A$ of width less than $\alpha$
and the neigborhood of every vertex of $A$ is $\alpha$-bounded in $\PP$.  

The following theorem is the main technical result of this article.  It shows that every highly edge-connected set in a graph $G$ is $\alpha$-linear for some bounded value $\alpha$ if $G$ does not contain a strong immersion of a fixed graph $F$.
 
\begin{theorem}\label{thm-main}
For every graph $F$, there exists a value $\alpha$ such that if a graph $G$ does not contain $F$ as a strong immersion, then every $\alpha$-edge-connected subset of $V(G)$ is $\alpha$-linear.
\end{theorem}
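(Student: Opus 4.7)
The plan is to prove Theorem~\ref{thm-main} by induction on $|V(F)|$, choosing $\alpha$ as a sufficiently rapidly growing function of $|V(F)|$, and constructing the path-like decomposition greedily. Starting from an $\alpha$-edge-connected set $W$ in a graph $G$ with no strong $F$-immersion, I would iteratively build an ordering $x_1, x_2, \ldots, x_t$ of $W \setminus A$ together with the bags $B_0, B_1, \ldots$, extending from left to right, maintaining the invariant that the cut between the current prefix $L_i = \{x_1,\ldots,x_i\} \cup B_0 \cup \cdots \cup B_{i-1}$ and its complement has size less than $\alpha$, and that the number of apex vertices set aside in $A$ stays bounded.

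At each step, I would choose the next vertex $x_{i+1}$ as one of $W \setminus (A \cup L_i)$ that can be separated from the rest of $W \setminus L_i$ by a small edge cut through $L_i \cup \{x_{i+1}\} \cup B_i$ for an appropriate bag $B_i$; the existence of such a vertex should follow by submodularity of the edge-cut function and an uncrossing argument that keeps the family of successive $x$-cuts essentially laminar, and hence of globally bounded width. Any vertex of $W$ whose neighborhood begins to spread across many already constructed bags, or which would otherwise force an unavoidably large $x$-cut, should be moved into $A$; a careful charging scheme is then needed to bound $|A|$ by $\alpha$. A delicate point here is that removing vertices into $A$ slightly degrades the edge-connectivity of the remaining set, so the initial choice of $\alpha$ must absorb this loss across all iterations.

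The hard part is showing that the greedy step cannot fail without producing a strong $F$-immersion. If at some stage no small cut isolates any remaining vertex of $W$ and no candidate apex vertex can be afforded, then the right side must be \emph{tangle-like}, with many vertices of $W$ pairwise linked through abundant edge-disjoint paths that evade any small cut. Combining this richness on both sides of the construction with the high edge-connectivity of $W$, and invoking routing techniques analogous to those of DeVos et al.~\cite{mindim}, I would assemble a strong immersion of $K_{|V(F)|}$ (with sufficient edge multiplicity) whose branch vertices lie in $W$, which in turn contains $F$ as a strong immersion. The main obstacle, and the technical heart of the proof, is enforcing the \textbf{strong} condition that the paths realizing edges of the immersion avoid all other branch vertices. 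This forces me to reroute paths locally within each bag, exploiting that each bag meets $X$ in a bounded way so that branch vertices can be threaded around bag interiors, while simultaneously handling parallel edges and loops carefully at the boundary of each cut and tracking the quantitative interplay between width, apex size, and edge-connectivity budget across the iteration.
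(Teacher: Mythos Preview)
Your proposal is a plausible-sounding outline, but it is not a proof: the three steps you label as ``should follow'', ``careful charging scheme'', and ``the hard part'' are exactly the content of the theorem, and you have not supplied arguments for any of them.

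Concretely, the first gap is the greedy step. You assert that at each stage either some remaining vertex of $W$ can be separated from the others by a cut of size less than $\alpha$, or else the configuration is ``tangle-like'' and yields a strong immersion of $K_{|V(F)|}$. Neither direction is justified. High edge-connectivity of $W$ in $G$ does \emph{not} by itself produce laminar small cuts between consecutive vertices, and a failure of the greedy step does not obviously give enough structure to route a \emph{strong} immersion (your invocation of DeVos et al.\ concerns minimum-degree hypotheses in simple graphs and does not apply here). The paper obtains both facts from an external separation theorem of Dvo\v{r}\'ak and Klimo\v{s}ov\'a (Theorem~\ref{thm-sep}): given a vertex $x$ and a set $Y$ with no small cut between them, one can delete a bounded combination of vertices of $Y$ and edges to disconnect $x$ from $Y$. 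This is the engine that manufactures the small cuts, and nothing in your sketch replaces it.

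The second gap is the bound on $|A|$. Your charging scheme is unspecified, and moving vertices into $A$ whenever their neighborhoods spread over many bags gives no a priori bound. The paper avoids a greedy construction entirely: it defines an auxiliary simple graph $G(m,W)$ on $W$ whose edges record pairs joined by $m$ edge-disjoint paths in $G-(W\setminus\{x,y\})$ (so strongness is built in), shows $G(m,W)$ is connected with no $K_{1,|V(F)|}$ minor, and then removes a linearizing set $A$ of size at most $4|V(F)|$ so that $G(m,W)-A$ is a union of paths. The ordering $x_1,\dots,x_t$ is read off globally from these paths, not built left to right, and the cuts $L_i$ are then found all at once by applying the separation lemma with $A\cup\{x_i\}$ and uncrossed by submodularity. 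Your induction on $|V(F)|$ is announced but never used. In short, the architecture you propose could perhaps be made to work, but only after importing the same external separation theorem and the $G(m,W)$ mechanism that the paper uses---at which point the greedy framing is no longer doing any work.
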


Conversely, we show as well that the property of being $\alpha$-linear is a good approximate characterization of graphs excluding a fixed immersion in that every graph which satisfies this property for every highly edge connected set cannot contain a strong immersion of a big clique.

Assuming Theorem \ref{thm-main}, we can derive a global structure theorem for graphs excluding a fixed graph as a strong immersion.  The theorem and its proof are presented in Section \ref{sec:struct}.  The proof of Theorem \ref{thm-main} is given in Section \ref{sec:decomp} and in the final section, we show that the converse statement to the decomposition and structure theorem are approximately true.

We establish some notation we will use going forward.  Let $G$ be a graph and $X \subseteq V(G)$.  We use $G[X]$ to denote the subgraph of $G$ induced by $X$.  The graph $G-X$ refers to the subgraph of $G$ induced on $V(G)\setminus X$.  For a subset $K$ of edges, $G-K$ is the subgraph with vertex set $V(G)$ and edge set $E(G) \setminus K$.  We will use $\delta(X)$ to refer to the set of edges with one endpoint in $X$ and one endpoint not in $X$ (specifically, $\delta(X)$ does not contain any loops).  A \emph{separation} of $G$ is a pair $(X, Y)$ of non-empty subsets of $V(G)$ such that $X \cap Y = \emptyset$ and every edge has all it's endpoints either contained in $X$ or contained in $Y$.

\section{A structure theorem}\label{sec:struct}

In this section, we show how Theorem \ref{thm-main} gives rise to a global structure theorem for graphs which exclude a fixed graph $H$ as a strong immersion.  We first present some further notation.

When studying graph minors, a natural decomposition is to break the graph on a small vertex cutset and look at the structure on each side of the cutset.  This gives rise to the operation of clique sums on graphs.  Given that graph immersions consist of a set of edge disjoint paths, it is natural to instead look at when the graph can be decomposed on a small edge cut.  This motivates the definition of what we will refer to as edge sums in graphs. 

\begin{definition}
Let $G$, $G_1$, and $G_2$ be graphs.  Let $k \ge 1$ be a positive integer.  The graph $G$ is a \emph{$k$-edge sum} of $G_1$ and $G_2$ if the following holds.  There exist vertices $v_i \in V(G_i)$ such that $\deg(v_i) = k$ for $i = 1, 2$ and a bijection $\pi: \delta(v_1) \rightarrow \delta(v_2)$ such that $G$ is obtained from $(G_1 - v_1) \cup (G_2 - v_2)$ by adding an edge from $x \in V(G_1) - v_1$ to $y\in V(G_2) - v_2$ for every pair $e_1, e_2$ of edges such that $e_1 \in \delta(v_1)$, $e_2=\pi(e_1)$, the ends of $e_1$ are $x$ and $v_1$, and the ends of $e_2$ are $y$ and $v_2$.

We will also refer to a $k$-edge sum as an edge sum of \emph{order} $k$.  The edge sum is \emph{grounded} if  there exist vertices $v_1'$ and $v_2'$ in $G_1$ and $G_2$, respectively, such that for $i = 1, 2$, $v_i' \neq v_i$ and there exist $k$ edge-disjoint paths linking $v_i$ and $v_i'$.  If $G$ can be obtained by a $k$-edge sum of $G_1$ and $G_2$, we write $G = G_1 \hat{\oplus}_k G_2$.
\end{definition}

The following lemma appears in \cite{wollims} and shows that edge sums preserve the presence of immersions.
\begin{lemma}[\cite{wollims}]\label{lem:sum}
Let $G$, $G_1$, and $G_2$ be graphs and let $k \ge 1$ be a positive integer.  Assume $G = G_1 \hat{\oplus}_k G_2$, and assume that the edge sum is grounded.  Let $H$ be an arbitrary graph.  If $G_1$ or $G_2$ admits an immersion of $H$, then $G$ does as well.  If the immersion in either $G_1$ or $G_2$ is strong, then the immersion in $G$ is also strong.  
\end{lemma}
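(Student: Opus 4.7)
My plan is to construct an immersion $\theta$ of $H$ in $G$ from a given immersion $\theta_1$ in $G_1$ (the $G_2$ case is symmetric), by rerouting through $G_2$ every part of $\theta_1$ that used the deleted vertex $v_1$. The grounding hypothesis on $G_2$ is what makes this possible: by Menger's theorem it supplies $k$ pairwise edge-disjoint paths $P_1,\ldots,P_k$ in $G_2$ from $v_2$ to some $v_2'$, which I may take to start with distinct edges of $\delta(v_2)$, say $P_s$ starts with $e_2^{(f(s))}$ for a bijection $f$. Deleting $v_2$ turns each $P_s$ into a walk $P_s'$ in $G_2-v_2\subseteq G$ from $w_{f(s)}$ to $v_2'$, where $w_i$ denotes the endpoint of $\pi(e_1^{(i)})$ other than $v_2$; by construction these walks are pairwise edge-disjoint.

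On vertices I would set $\theta(x)=\theta_1(x)$ whenever $\theta_1(x)\neq v_1$, and $\theta(x)=v_2'$ for the at most one $x\in V(H)$ with $\theta_1(x)=v_1$; injectivity is immediate. On edges, writing $u_i$ for the non-$v_1$ endpoint of $e_1^{(i)}$ and $S_e=\delta(v_1)\cap E(\theta_1(e))$, I would define $\theta(e)$ as the union of $\theta_1(e)-v_1$, the replacement edges $\{u_iw_i:e_1^{(i)}\in S_e\}$ in $G$, and, for every $i$ with $e_1^{(i)}\in S_e$, the walk $P_{f^{-1}(i)}'$. If $S_e=\emptyset$ this just recovers $\theta(e)=\theta_1(e)$; otherwise each component of $\theta_1(e)-v_1$ is attached via a replacement edge to some $w_i$, and all the attached $P_s'$ meet at $v_2'$, so $\theta(e)$ is connected. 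The incidence condition at branch vertices, and (for loops) the existence of a closed walk through $\theta(v)$ with no repeated edges, are then routine to check.

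The one substantive verification is edge-disjointness of the $\theta(e)$'s, and this is precisely where the grounded structure pays off. For distinct $e,e'\in E(H)$, the parts in $\theta_1(e)-v_1$ are edge-disjoint from those in $\theta_1(e')-v_1$ by edge-disjointness of $\theta_1$; the replacement edges are disjoint because $S_e$ and $S_{e'}$ are disjoint subsets of $\delta(v_1)$; and, crucially, each walk $P_s'$ is invoked by at most one $\theta(e)$, namely the unique edge whose image contains $e_1^{(f(s))}$. Without the $k$ edge-disjoint routes through $G_2$ funneling into the common endpoint $v_2'$, these reroutings could collide, so the grounding hypothesis is exactly what is needed here; this is the main obstacle I would expect in a cleaner-looking argument that did not assume grounding.

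For the strong case, suppose $\theta_1$ is strong and $x\in V(H)$ is not incident to $e$. If $\theta_1(x)\neq v_1$ then $\theta(x)=\theta_1(x)\in V(G_1)\setminus\{v_1\}$, and the vertices of $\theta(e)$ lying in $V(G_1)\setminus\{v_1\}$ are exactly those of $\theta_1(e)-v_1$, which omits $\theta_1(x)$ by strongness of $\theta_1$. If $\theta_1(x)=v_1$ then $\theta(x)=v_2'$, but $v_2'\in\theta(e)$ would force $S_e\neq\emptyset$ hence $v_1\in\theta_1(e)$, contradicting strongness of $\theta_1$ since $x$ is not incident to $e$. Thus $\theta$ is strong whenever $\theta_1$ is.
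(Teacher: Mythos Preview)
The paper does not give its own proof of this lemma; it is quoted from \cite{wollims} and used as a black box. Your argument is the natural one and is essentially how the lemma is proved in the cited source: replace every passage of $\theta_1$ through $v_1$ by a detour into $G_2$ along the $k$ edge-disjoint $v_2$--$v_2'$ paths supplied by the grounding hypothesis, and observe that distinct edges of $\delta(v_1)$ get routed along distinct (hence edge-disjoint) detours.

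One small point you may want to tighten: the loop case is not quite as routine as you suggest. When $e$ is a loop at $x$ and the cycle in $\theta_1(e)$ through $\theta_1(x)$ passes through $v_1$, the rerouted closed walk you build (the arc of the old cycle avoiding $v_1$, two replacement edges, and two of the $P_s'$) is a closed trail through $\theta(x)$, not a priori a cycle. You then need the easy observation that any subgraph in which every vertex has even degree contains a cycle through each of its non-isolated vertices (delete one edge at $\theta(x)$; a parity argument shows its two ends lie in the same component of what remains). With that remark added, the proof is complete.
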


Just as clique sums give rise to tree decompositions, edge sums yield a natural tree-like decomposition of graphs.

\begin{definition}
A \emph{tree-cut decomposition} of a graph $G$ is a pair $(T, \zX)$ such that $T$ is a tree and $\zX = \{X_t \subseteq V(G): t \in V(T)\}$ is a near-partition of the vertices of $G$ indexed by the vertices of $T$.  For each edge $e = uv$ in $T$, $T-uv$ has exactly two components, namely $T_v$ and $T_u$ containing $v$ and $u$ respectively.  The \emph{adhesion} of the decomposition is 
\begin{equation*}
max_{uv \in E(T)} \left | \delta\left ( \bigcup_{t \in V(T_v)} X_t \right)\right |
\end{equation*}
when $T$ has at least one edge, and 0 otherwise.  
The sets $\{X_t: t \in V(T)\}$ are called the \emph{bags} of the decomposition.
\end{definition}
Note that the definition allows bags to be empty.  

We will need to define one more operation on graphs.  Let $G$ be a graph and $X \subseteq V(G)$.  The graph $G'$ is obtained by \emph{consolidating $X$} if we identify the vertices of $X$ to a single vertex $v_X$ and delete all loops incident to $v_X$.

Let $G$ be a graph and $(T, \zX)$ a tree-cut decomposition of $G$.  Fix a vertex $t \in V(T)$.  The \emph{torso of $(G, T, \zX)$ at $t$} is the graph $H$ defined as follows.  If $|V(T)| = 1$, then the torso $H$ of $(G, T, \zX)$ at $t$ is simply $G$ itself.  If $|V(T)| \ge 2$, let the components of $T-t$ be $T_1, \dots, T_l$ for some positive integer $l$.  Let $Z_i = \bigcup_{x \in V(T_i)} X_x$ for $1 \le i \le l$.  Then $H$ is made by consolidating each set $Z_i$ to a single vertex $z_i$.  The vertices $X_t$ are called the \emph{core vertices} of the torso.  The vertices $z_i$ are called the \emph{peripheral vertices} of the torso.  When there can be no confusion as to the graph $G$ in question, we will also refer to the torso of $(T, \zX)$ at a vertex $t$.

The following lemma shows that tree cut decompositions can be combined in an edge sum of graphs.
\begin{lemma}[\cite{wollims}]\label{lem:tredecom}
Let $G$, $G_1$, and $G_2$ be graphs such that $G = G_1 \hat{\oplus}_k G_2$ for some $k \ge 0$.  If $G_i$ has a tree-cut decomposition $(T_i, \zX_i)$ for $i = 1, 2$, then $G$ has a tree-cut decomposition $(T, \zY)$ such that the adhesion of $(T, \zY)$ is equal to 
\begin{equation*}
max \{ k, adhesion(T_1, \zX_1), adhesion(T_2, \zX_2)\}.
\end{equation*}
Moreover, for every $t \in V(T)$, there exists $i \in \{1,2\}$ and a vertex $t'$ in $V(T_i)$ such that the torso $H_t$ of $(G, T, \zY)$ at $t$ is isomorphic to the torso $H'$ of $(G_i, T_i, \zX_i)$ at $t'$.  Finally, every core vertex of $H_t$ is a core vertex of $H'$.
\end{lemma}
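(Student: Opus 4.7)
The plan is to graft $T_1$ and $T_2$ together at the nodes whose bags host $v_1$ and $v_2$, and to delete those two summed-away vertices from their bags.

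First I would pick $t_i \in V(T_i)$ with $v_i \in X_{t_i}^{(i)}$, uniquely determined since each $\zX_i$ is a near-partition of $V(G_i)$. I form $T$ as the disjoint union of $T_1$ and $T_2$ together with a new edge $t_1t_2$, and set $Y_{t_i} := X_{t_i}^{(i)} \setminus \{v_i\}$ for $i=1,2$, and $Y_t := X_t^{(i)}$ for every other $t \in V(T_i)$. Since $V(G) = (V(G_1)\setminus\{v_1\}) \cup (V(G_2)\setminus\{v_2\})$, the family $\zY$ is a near-partition of $V(G)$.

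Second, I would verify the adhesion. For the new edge $t_1t_2$, the two sides of $T$ induce exactly $V(G_1)\setminus\{v_1\}$ and $V(G_2)\setminus\{v_2\}$, so the cut in $G$ across it consists precisely of the $k$ edges produced by the bijection $\pi$, contributing adhesion $k$. For any edge $e \in E(T_i)$, let $U$ be one side of $T_i - e$ and $Z := \bigcup_{t \in U} X_t^{(i)}$; the corresponding side of $T - e$ equals $Z \setminus \{v_i\}$, possibly together with $V(G_{3-i}) \setminus \{v_{3-i}\}$, depending on which side contains $t_i$. A simple edge-counting argument, using the key observation that each edge of $G_i$ incident to $v_i$ that crossed $e$ in $T_i$ is replaced in $G$ by exactly one sum edge that crosses $e$ in $T$, shows the cuts in $G_i$ and in $G$ across $e$ have equal size. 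Hence the adhesion of $(T,\zY)$ equals $\max\{k,\,\text{adhesion}(T_1,\zX_1),\,\text{adhesion}(T_2,\zX_2)\}$.

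Finally I would verify the torsos. For $t \in V(T_i) \setminus \{t_i\}$, the components of $T-t$ on the $T_i$-side match those of $T_i - t$, except that the one containing $t_i$ is enlarged by all of $V(T_{3-i})$. Consolidating this enlarged component in $G$ produces a single vertex whose incident edges correspond bijectively (via the same replacement by $\pi$) to the edges incident to the analogous peripheral vertex in the torso of $(G_i,T_i,\zX_i)$ at $t$; since $Y_t = X_t^{(i)}$, this gives the required isomorphism fixing all core vertices. For $t = t_i$, the same analysis applies with the enlarged component now being the one through the new edge $t_1t_2$; the resulting peripheral vertex of $H_t$ takes over the role of $v_i$ in the torso $H'$ of $(G_i,T_i,\zX_i)$ at $t_i$, so the remaining core vertices $X_{t_i}^{(i)} \setminus \{v_i\}$ of $H_t$ are all core vertices of $H'$. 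The one step requiring real care is the edge-counting in the adhesion and torso arguments, where one must track precisely how each edge at $v_i$ in $G_i$ is replaced by its $\pi$-image in $G$; everything else is routine bookkeeping.
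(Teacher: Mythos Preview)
The paper does not actually prove this lemma: it is quoted from \cite{wollims} and used as a black box, so there is no in-paper argument to compare against. Your construction---joining $T_1$ and $T_2$ by an edge between the nodes whose bags contain $v_1$ and $v_2$, and deleting $v_i$ from those two bags---is exactly the standard one, and your verification of the adhesion and torso claims via the bijection between edges at $v_i$ in $G_i$ and sum edges in $G$ is correct. The only point worth flagging is the ``equal to'' in the adhesion formula and the torso isomorphism at $t=t_i$: both implicitly require that $v_i$ carry no loops in $G_i$ (otherwise $|\delta(v_i)|<\deg(v_i)=k$ and the peripheral vertex $z_{\text{new}}$ fails to reproduce those loops), but this is a technicality of the lemma's statement rather than a defect in your argument.
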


We can now state the structure theorem for graphs excluding a fixed clique immersion in terms of a tree-cut decomposition.  The proof will follow easily assuming Theorem \ref{thm-main}.  We say that a graph is \emph{$\alpha$-basic}
if the set of all its vertices of degree at least $\alpha$ is $\alpha$-linear, i.e., it has a path-like decomposition such that
all the vertices in its bags have degree less than $\alpha$. 

\begin{theorem}\label{thm:weakdecomp2}
For every graph $F$, there exists an integer $\alpha = \alpha(F)$ such that if a graph $G$ does not contain $F$ as a strong immersion, then there exists a tree-cut decomposition $(T, \zX)$ of $G$ of adhesion less than $\alpha$ such that each torso is $\alpha$-basic.
\end{theorem}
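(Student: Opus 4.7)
Let $\alpha$ be the constant provided by Theorem~\ref{thm-main} applied to $F$. I would prove Theorem~\ref{thm:weakdecomp2} by induction on $|V(G)|$. The key dichotomy is whether $G$ admits an edge cut of size less than $\alpha$ separating two vertices of degree at least $\alpha$. If no such cut exists, then the set $W$ of vertices of degree at least $\alpha$ is $\alpha$-edge-connected in $G$, and Theorem~\ref{thm-main} guarantees that $W$ is $\alpha$-linear; that is, $G$ itself is $\alpha$-basic. The trivial tree-cut decomposition whose tree is a single vertex with bag $V(G)$ then works, since its unique torso is $G$.

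Otherwise, pick two vertices $u,v$ of degree at least $\alpha$ separated by an edge cut of size strictly less than $\alpha$, and take a minimum $u$-$v$ cut $\delta(A)$ of size $k$, with $u\in A$ and $v\in V(G)\setminus A$. Form $G_1$ by consolidating $V(G)\setminus A$ into a single vertex $v_1$, and $G_2$ by consolidating $A$ into $v_2$; then $G = G_1\hat{\oplus}_k G_2$. Menger's theorem supplies $k$ edge-disjoint $u$-$v$ paths in $G$, which project to $k$ edge-disjoint paths from $u$ to $v_1$ in $G_1$ and from $v$ to $v_2$ in $G_2$; since $\deg_{G_1}(u),\deg_{G_2}(v)\ge \alpha > k$, we have $u\ne v_1$ and $v\ne v_2$, so the edge sum is grounded. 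Because each of $A$ and $V(G)\setminus A$ contains a vertex of degree at least $\alpha$, both $G_1$ and $G_2$ have strictly fewer vertices than $G$, so induction applies. By Lemma~\ref{lem:sum} (in the contrapositive), neither $G_1$ nor $G_2$ contains $F$ as a strong immersion, so each admits a tree-cut decomposition of adhesion less than $\alpha$ whose torsos are all $\alpha$-basic. Lemma~\ref{lem:tredecom} then combines them into a tree-cut decomposition $(T,\zY)$ of $G$ of adhesion $\max\{k,\mathrm{adhesion}(T_i,\zX_i)\}<\alpha$, and since every torso of $(T,\zY)$ is isomorphic as an abstract graph to some torso of one of the two sub-decompositions, it is $\alpha$-basic as well.

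The main subtlety I expect is confirming that being $\alpha$-basic is preserved under the torso isomorphism supplied by Lemma~\ref{lem:tredecom}: this is a purely structural property (a statement about which vertices have degree at least $\alpha$ together with the existence of an appropriate path-like decomposition on that set), so an abstract graph isomorphism carries it across unchanged. The only other delicate point is the groundedness of the edge sum, which is exactly why one must choose $u,v$ of degree at least $\alpha$ and take a minimum $u$-$v$ cut: both conditions are needed to pull the $k$ edge-disjoint paths back from $G$ into each $G_i$ while keeping the endpoint distinct from the consolidated vertex, and they simultaneously guarantee that the recursion makes genuine progress.
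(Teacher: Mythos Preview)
Your proposal is correct and follows essentially the same approach as the paper: the same dichotomy on whether two high-degree vertices are separated by a small edge cut, the same appeal to Theorem~\ref{thm-main} in the base case, the same use of consolidation to write $G$ as a grounded edge sum, and the same combination via Lemmas~\ref{lem:sum} and~\ref{lem:tredecom}. The only cosmetic difference is that the paper inducts on $|E(G)|$ rather than $|V(G)|$; both measures strictly decrease for the same reason (each side of the cut contains a vertex of degree at least $\alpha>k$, so neither side can be a singleton), and either choice works.
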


\begin{proof}[Proof (assuming Theorem \ref{thm-main})]
Fix the graph $F$ and let $\alpha$ be the value given in Theorem \ref{thm-main}.  Assume the statement is false, and let $G$ be a counterexample on a minimum number of edges.   The set of vertices of degree at least $\alpha$ in $G$ is not $\alpha$-edge-connected, as otherwise Theorem \ref{thm-main} yields a contradiction.  Thus, we may assume that there exists vertices $x$ and $y$ each of degree at least $\alpha$ such that there exists $X \subseteq V(G)$ with $x \in X$, $y \notin X$ and $|\delta(X)|  < \alpha$.  

Let $G_X$ be the graph obtained by consolidating $V(G) \setminus X$ and $G_Y$ the graph obtained by consolidating $X$.  By construction, $G = G_X \hat{\oplus}_k G_Y$ for some positive integer $k < \alpha$, and if we assume we chose a minimum order edge cut separating $x$ and $y$, the edge sum is grounded.  Given that both $x$ and $y$ have degree at least $\alpha$, we see that $|E(G_X)| < |E(G)|$ and $|E(G_Y)| < |E(G)|$.  By Lemma \ref{lem:sum}, neither $G_X$ nor $G_Y$ contains $F$ as a strong immersion.  Both $G_X$ and $G_Y$ have the desired decomposition by minimality, and therefore, by Lemma \ref{lem:tredecom}, $G$ has the desired decomposition as well.
\end{proof}

\section{Proof of Theorem \ref{thm-main}}\label{sec:decomp}

We prove a slightly stronger statement which gives a clearer picture on the relationship between the parameters involved.  Let $G$ be a graph and $a, w, p \ge 1$ be positive integers.  A set $W\subseteq V(G)$ is \emph{$(a,w,p)$-linear} if there exists a set $A\subseteq W$ of size at most $a$
such that $G-A$ has a path-like decomposition $\PP$ with respect to $W\setminus A$ of width less than $w$
and the neigborhood of every vertex of $A$ is $p$-bounded in $\PP$.

\begin{theorem}\label{thm:mainstrong}
For every graph $F$, there exist integers $a$, $w$ and $p$ such that
if a graph $G$ does not contain $F$ as a strong immersion, then every $w$-edge-connected subset of $V(G)$ is $(a,w,p)$-linear.
\end{theorem}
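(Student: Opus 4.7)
The plan is to prove the contrapositive by induction on a complexity measure of the pair $(G,W)$, say $|E(G)| + |V(G)\setminus W|$, choosing $a$, $w$, $p$ to be explicit (but large) functions of $|V(F)|$. Since $F$ is strongly immersed in $K_{|V(F)|}$, it suffices to handle the case $F = K_t$ with $t = |V(F)|$; this cleanly separates the combinatorial question of constructing branch vertices and edge-disjoint paths from which pattern $F$ we are trying to build.

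Standard minimality reductions come first. If $|W|$ is bounded in terms of $t$, setting $A = W$ with a single empty bag proves the claim, so $|W|$ may be assumed large. Likewise one can assume $G$ is connected, every vertex outside $W$ has degree at least some threshold (else delete it or absorb it into a bag via induction), and no superfluous edges are present. The point is to reach a \emph{clean} instance where only structural moves are available.

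The central step is a dichotomy for a ``peeling'' operation. Either (i) there is an edge cut $\delta(S)$ with $|\delta(S)| < w$, $S \cap W \ne \emptyset$, $W \setminus S \ne \emptyset$, such that $\min(|S\cap W|, |W\setminus S|)$ is bounded by a constant depending only on $t$; or (ii) no such cut exists. Because $W$ is $w$-edge-connected, a cut of size less than $w$ cannot actually separate two vertices of $W$ in $G$, so the cuts that appear in (i) must pass \emph{through} a small set of $W$-vertices: these vertices become the initial $x_i$'s of the ordering, the non-$W$ vertices of $S$ populate the corresponding bags $B_0,\ldots, B_{j-1}$, and the inductive hypothesis applied to the other side of the cut produces the rest of the decomposition. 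Lemma~\ref{lem:sum} and Lemma~\ref{lem:tredecom} provide the bookkeeping that combines the peeled prefix with the recursive piece while preserving both the width bound and the edge-connectivity hypothesis needed to re-apply induction.

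The main obstacle is case (ii): showing that the absence of a peeling cut forces a strong $K_t$-immersion. The absence of small separators yields a ``thick'' structure on $W$: every moderate-sized subset of $W$ has many edge-disjoint ties to its complement, with no bounded bypass around any small vertex set. The natural approach is to select $t$ branch vertices in $W$ and greedily route the $\binom{t}{2}$ edges of $K_t$ as edge-disjoint paths via Menger's theorem. Weak immersions come easily this way; the strong condition — that each path avoid the other $t-2$ branch vertices — is the real difficulty, since edge-connectivity alone does not forbid paths from passing through branch vertices. To cope, one identifies a bounded set of structurally problematic vertices (those that paths cannot avoid and whose neighborhoods spread out through many prospective bags) and places them in the exceptional set $A$ of size at most $a$, then verifies that the neighborhoods of these vertices are $p$-bounded in the decomposition produced for $G-A$. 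For the remaining branch-vertex candidates one uses the high-connectivity slack — via local exchange and splitting-off arguments — to reroute Menger paths around them. Balancing the three parameters $a$, $w$, $p$ so that all bounds remain finite under the induction, while simultaneously enforcing the ``strong'' routing condition in case (ii), is the technical heart of the proof.
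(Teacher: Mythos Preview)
Your dichotomy is not well-formed, and the hard case is exactly where your sketch runs out of content. In case~(i) you posit an edge cut $\delta(S)$ with $|\delta(S)|<w$, $S\cap W\neq\emptyset$, $W\setminus S\neq\emptyset$; but $W$ is assumed $w$-edge-connected, so no such $S$ exists, and you immediately acknowledge this by saying the cut must ``pass through a small set of $W$-vertices''. That pivot is never made precise: you never define which vertex (or vertices) $x$ to delete so that $G-x$ acquires a small edge cut peeling off a bounded prefix of $W$, nor why such an $x$ exists, nor why the prefixes produced by successive peels are consistent with a single linear order. Invoking Lemmas~\ref{lem:sum} and~\ref{lem:tredecom} does not help: those lemmas combine tree-cut decompositions across a grounded edge sum and are used for Theorem~\ref{thm:weakdecomp2}, not to glue path-like decompositions with respect to a fixed set $W$.

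The paper's proof is not inductive at all; it builds the decomposition directly from an auxiliary graph. One forms $H=G(m,W)$, where $xy\in E(H)$ iff $G-(W\setminus\{x,y\})$ contains $m$ edge-disjoint $x$--$y$ paths, and shows (Lemma~\ref{lemma-nok1k}) that $H$ has no $K_{1,|V(F)|}$ minor, hence (Lemma~\ref{lemma-deg3}) a linearizing set $A$ of size at most $4|V(F)|$. The path order of $H-A$ \emph{is} the order $x_1,\dots,x_t$ of $W\setminus A$. The key engine is Lemma~\ref{lemma-vcon}, itself resting on the Dvo\v{r}\'ak--Klimo\v{s}ov\'a separation theorem (Theorem~\ref{thm-sep}): whenever $H-A'$ is disconnected for a bounded $A'$, the two sides are separated in $G-A'$ by an edge cut of size less than $w$. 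Applying this with $A'=A\cup\{x_i\}$ produces the $x_i$-cuts; a submodularity argument nests them and yields the bags. The $p$-boundedness of neighbourhoods of $A$ is again derived from Theorem~\ref{thm-sep}. Your case~(ii) --- forcing a strong $K_t$-immersion from the absence of peeling cuts via ``Menger plus rerouting and splitting-off'' --- is precisely the step that Theorem~\ref{thm-sep} is doing the real work for, and nothing in your outline substitutes for it: edge-connectivity alone, even after deleting a bounded set, does not let you reroute Menger paths off the remaining branch vertices, which is exactly why an external structural result is needed.
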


To see that Theorem \ref{thm:mainstrong} is in fact a strengthening of Theorem \ref{thm-main}, we observe the following.  Assume that a subset $W$ of vertices in a graph $G$ is $(a,w,p)$-linear for positive integers $a, w, p$.  Then a path-like decomposition of $G$ with respect to $W$ which certifies that $W$ is $(a, w, p)$-linear trivially certifies as well that $W$ is $(a', w', p')$-linear for all $a' \ge a$, $w' \ge w$, and $p' \ge p$.  Thus, $W$ is $\alpha$-linear for $\alpha = \max\{a, w, p\}$, implying that Theorem \ref{thm-main} is an immediate consequence of Theorem \ref{thm:mainstrong}.

For the remainder of this section, we define 
\begin{equation*}
d(k)=(2k+1)^{8k+4}k^2(k+1).
\end{equation*}
We use the following result of Dvo\v{r}\'ak and Klimo\v{s}ov\'a~\cite{dvoklim}.

\begin{theorem}[\cite{dvoklim}]\label{thm-sep}
Let $G$ be a graph and $x \in V(G)$.  Let $k\ge 3$ be an integer.
Let $Y\subseteq V(G)\setminus \{x\}$ be a set of vertices such that $G$ contains no edge cut of size less than $k$ separating $x$ from a vertex in $Y$.
If a graph $F$ of maximum degree at most $k$ does not appear in a graph $G$ as a strong immersion, then
there exist sets $Y'\subseteq Y$ and $K\subseteq E(G)$ such that $k|Y'|+|K|<d(k)|V(F)|$ and the component of $(G-Y')-K$ that
contains $x$ does not contain any vertex of $Y$.
\end{theorem}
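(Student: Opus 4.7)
The plan is to prove Theorem~\ref{thm-sep} by induction on $|V(F)|$ via the contrapositive: assume no pair $(Y',K)$ with $k|Y'|+|K|<d(k)|V(F)|$ disconnects $x$ from $Y$ in $(G-Y')-K$, and construct a strong immersion of $F$ in $G$. The base case $|V(F)|\le 1$ is handled by placing the single vertex at $x$, using the $k$-edge-connectivity of $x$ to some $y\in Y$ to witness any required loop.

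For the inductive step, pick $v_0\in V(F)$ of degree $d\le k$ and aim to (i) choose a branch vertex $z$ for $v_0$ in $G$, (ii) commit $d$ edge-disjoint paths $P_1,\ldots,P_d$ from $z$ to distinct ``anchor'' vertices $y_1,\ldots,y_d$ that will host the neighbors of $v_0$ in $F$, and (iii) recursively immerse $F'=F-v_0$ in a residual graph. To locate $z$ and the $P_i$'s, I would apply a Menger-type argument: the no-separator hypothesis (specialized to $Y'=\emptyset$) supplies many edge-disjoint $x$-to-$Y$ paths, and from this one can extract a vertex $z$ admitting $d$ edge-disjoint outgoing paths into $Y$. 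The paths are chosen by submodular uncrossing of minimum weighted cuts (with vertices weighted $k$ and edges weighted $1$) so that the set of internal vertices of $P_1,\ldots,P_d$ is small and so that their removal does not destroy the connectivity needed for the recursion.

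Then the recursion on $F'$ is applied to the graph $G'$ obtained from $G$ by deleting the edges of $P_1,\ldots,P_d$ and marking their internal vertices as forbidden (to preserve strongness), with $y_1$ playing the role of $x$ and the set of potential future branch hosts playing the role of $Y$. Iterating over the remaining anchors $y_2,\ldots,y_d$ (each invocation forcing the next branch vertex of $F'$ to land at the corresponding anchor), we either produce the required strong immersion of $F'$, which glued through $z$ with $P_1,\ldots,P_d$ yields a strong immersion of $F$, or we obtain at some stage a small weighted separator in $G'$, which combines with the per-step cost of the current round to keep the total separator size below $d(k)|V(F)|$.

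The principal obstacle is coordinating the choices of $z$ and of $P_1,\ldots,P_d$ so that three competing requirements are met at once: the internal vertices of the $P_i$'s must form a controlled-size set (to keep the forbidden apex small), the paths must be edge-disjoint and respect strongness relative to the eventual branch vertices of $F'$, and the residue in $G'$ must remain highly edge-connected from $y_1$ to the other anchors and to enough of the old $Y$ for the recursive call to go through. The natural tool is a submodular uncrossing argument on weighted $(x,y)$-separators applied iteratively. The branching factor of up to $2k+1$ over $O(k)$ layers of successive path selection accounts for the $(2k+1)^{8k+4}$ factor in $d(k)=(2k+1)^{8k+4}k^2(k+1)$, while $k^2(k+1)$ absorbs polynomial per-step overhead. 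I expect the technical heart of the proof to be the uncrossing lemma that makes the selection of $z$ and $P_1,\ldots,P_d$ possible without exhausting the connectivity budget, and the bookkeeping that telescopes the per-round costs into the claimed bound.
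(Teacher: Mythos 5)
First, a point of comparison: the paper does not prove this statement at all. Theorem~\ref{thm-sep} is imported verbatim from Dvo\v{r}\'ak and Klimo\v{s}ov\'a~\cite{dvoklim} and used as a black box, so there is no in-paper proof to measure your argument against. Your proposal therefore has to stand on its own, and as written it does not: it is a plan whose central difficulties are named but not resolved.

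The most serious gap is that the induction hypothesis you propose is not strong enough to support the recursion you describe. The statement of Theorem~\ref{thm-sep} only asserts that a strong immersion of $F$ exists \emph{somewhere} in $G$ when no small weighted separator exists; it gives no control over where the branch vertices land. But your inductive step needs to force the branch vertices of $F'=F-v_0$ to land exactly at the prescribed anchors $y_1,\dots,y_d$ (so that the committed paths $P_1,\dots,P_d$ from $z$ can serve as the images of the edges at $v_0$), and it needs the recursive call to respect a set of forbidden vertices (the internal vertices of the $P_i$) so that strongness is preserved. Neither feature is present in the statement being proved, so you cannot invoke it inductively; you would have to formulate and prove a substantially stronger ``rooted'' version with prescribed terminals and forbidden vertices and edges, and it is precisely in that strengthening that all of the difficulty lives. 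Relatedly, the two steps you yourself flag as the technical heart --- the uncrossing lemma that selects $z$ and $P_1,\dots,P_d$ with a controlled set of internal vertices while preserving enough connectivity for the recursion, and the telescoping of per-round costs into the bound $d(k)|V(F)|$ --- are asserted to exist but never carried out; the claim that the factors $(2k+1)^{8k+4}$ and $k^2(k+1)$ ``account for'' the branching and the overhead is reverse-engineered from the target formula rather than derived. As it stands this is a research programme, not a proof; to evaluate it one would have to consult the actual argument in~\cite{dvoklim}.
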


For a graph $G$, a set $W\subseteq V(G)$ and an integer $m$, let $G(m,W)$ denote the graph with vertex set $W$ such that
two vertices $x$ and $y$ in $W$ are adjacent in $G(m, W)$ iff $G-(W\setminus \{x,y\})$ contains at least $m$ pairwise edge-disjoint paths
joining $x$ with $y$.  As a corollary of Theorem~\ref{thm-sep}, Dvo\v{r}\'ak and Klimo\v{s}ov\'a~\cite{dvoklim} proved
that if $W$ is sufficiently edge-connected and sufficiently large and $G$ avoids some fixed graph as a strong immersion, then $G(m,W)$ is connected.
We need a strenghtening of this claim.

\begin{lemma}\label{lemma-vcon}
Let $F$ be a graph and let $k=\max\{\Delta(F),3\}$.
For all integers $a_0,m\ge 0$, there exists $w\ge k$ such
that the following holds.  Let $G$ be a graph, let $W\subseteq V(G)$ be a $w$-edge-connected set
and let $A$ be a subset of $W$ of size at most $a_0$.
Suppose that $G(m,W)-A$ is not connected and let $(X_1, X_2)$ be a separation of $G(m, W)-A$.  If $G$ does not contain $F$ as a strong immersion, then $G-A$ contains an edge-cut of size
less than $w$ separating $X_1$ from $X_2$.
\end{lemma}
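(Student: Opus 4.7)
The plan is a contradiction argument. Suppose $G$ contains no strong immersion of $F$, a separation $(X_1, X_2)$ of $G(m,W)-A$ exists, yet $G-A$ admits no edge-cut of size less than $w$ separating $X_1$ from $X_2$. Set $k=\max\{\Delta(F),3\}$ and pick $w$ as a sufficiently large function of $k$, $|V(F)|$, $d(k)$, $m$, and $a_0$, with the precise growth pinned down during the argument. The first move is to apply Theorem~\ref{thm-sep} inside $G$ to some chosen $x \in X_1$ with $Y = X_2$. Its hypothesis is met because $W$ is $w$-edge-connected in $G$ and $w \geq k$, and it yields $Y' \subseteq X_2$ and $K \subseteq E(G)$ with $k|Y'| + |K| < d(k)|V(F)|$ such that the component $C$ of $x$ in $G - Y' - K$ is disjoint from $X_2$.

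If $Y'$ were empty, $K$ (after small bookkeeping to handle $A$) would already be the edge cut we seek; the real difficulty is that vertices of $Y' \subseteq X_2 \subseteq W$ can have arbitrarily large degree in $G - A$, so the edge-boundary of $C$ in $G-A$ need not be small. Here the $G(m,W)-A$ separation enters crucially: for every $y \in Y'$ and every $x' \in X_1$, the pair $x'y$ is a non-edge of $G(m,W)-A$, which means there is an edge-cut of size less than $m$ in $G - (W \setminus \{x', y\})$ separating $x'$ from $y$. This forces most of the ``connectivity mass'' between $y$ and $C$ to be routed through other vertices of $W$, rather than through short routes disjoint from $W$. My plan is then to iterate Theorem~\ref{thm-sep}, applied alternately from the $X_1$-side and the $X_2$-side (or to progressively trimmed subgraphs): each $y \in Y'$ is either absorbed into the $X_2$-side of a slightly enlarged separator at a bounded edge cost, or else the many edge-disjoint routes through $W$ witness a strong immersion of $F$ in $G$, contradicting the hypothesis.

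The main obstacle, and the step I expect to spend the most care on, is controlling this iteration: ensuring every step either strictly decreases a natural measure of the residual separator (e.g.\ its number of vertex entries, or total routing cost) or yields the immersion, and that the overall number of iterations is bounded by a function of $|V(F)|$, $a_0$, and $m$. The dependencies on $a_0$ and $m$ enter precisely here --- $A$ can interrupt routes and must be peeled off vertex by vertex, while the $G(m,W)$ non-adjacency only yields $m$-path bounds per individual pair. A clean formalization might take $C$ maximal with respect to an appropriate partial order (for instance, inclusion among $x$-containing sets in $G$ that avoid $X_2$ and admit a vertex-plus-edge separator of bounded size), and argue that any extension either produces a strong $F$-immersion or strictly reduces the vertex portion of the separator, driving the iteration to termination with a total edge cut in $G-A$ of size below $w$, contradicting our standing assumption.
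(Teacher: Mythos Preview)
Your opening move and diagnosis are both correct: apply Theorem~\ref{thm-sep} from some $x\in X_1$ toward $X_2$, and the whole difficulty is that the vertex part $Y'\subseteq X_2$ of the resulting separator may have enormous degree in $G-A$. But the plan you sketch for resolving this difficulty does not close the gap.

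The specific problem is your use of the $G(m,W)$ non-adjacency. You invoke, for each $y\in Y'$ and each $x'\in X_1$, a cut of size $<m$ in $G-(W\setminus\{x',y\})$ separating $x'$ from $y$. That is a per-pair bound over a potentially enormous set $X_1$, with each such cut living in a different subgraph; there is no evident way to assemble these into a single bounded-size edge-cut in $G-A$ isolating $y$ from $X_1$. Your ``iterate and reduce a measure'' scheme never names a measure that actually decreases, and I do not see one that would. Relatedly, even if you could cheaply absorb each $y\in Y'$, you would only have separated the single vertex $x$ from $X_2$; you still need a cut separating all of $X_1$ from $X_2$, and $X_1$ can be arbitrarily large, so a union over $x\in X_1$ does not work either.

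What the paper does instead is a two-step argument with no open-ended iteration. First it proves a per-vertex statement (Claim~\ref{cl-single}): every single $x\in X_i$ is separated from $X_{3-i}$ in $G-A$ by an edge-cut of size $<w_0$, where $w_0$ depends only on $m$ and $s=d(k)|V(F)|$. The trick here is to apply Theorem~\ref{thm-sep} \emph{once more from the other side}: for each $y$ in the vertex part $Y$ of the first separator, apply Theorem~\ref{thm-sep} from $y$ toward $X_1$, getting a small $(Y_y,K_y)$. Now if $x$ had $w_0$ edge-disjoint paths to $X_2$ in $G-A$, most avoid all $K$'s; each such path must end in $Y$, and its last $X_1$-vertex must lie in $Z=\bigcup_y Y_y$. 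Since $|Y|,|Z|<s^2$, pigeonhole gives a pair $u\in Z\subseteq X_1$, $v\in Y\subseteq X_2$ joined by $\ge m$ edge-disjoint paths internally avoiding $W$ --- an edge of $G(m,W)$ crossing the separation, contradiction. This is the step you were groping for: the right substitute for the per-pair $m$-bound is a second, symmetric invocation of Theorem~\ref{thm-sep}.

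Second, to pass from per-vertex cuts to a global cut, the paper exploits $A$ in a way you did not anticipate. Since $W$ is $w$-edge-connected in $G$ but each $v\in X_1$ has only a $w_0$-cut to $X_2$ in $G-A$, almost all of the $w$ paths from $v$ to $X_2$ in $G$ must pass through $A$; pigeonhole assigns to each $v$ some $a_v\in A$ with $\ge s$ such paths (Claim~\ref{cl-connecta}). Now assume for contradiction that no global $(<w)$-cut exists in $G-A$: Menger gives $w$ edge-disjoint $X_1$--$X_2$ paths internally avoiding $W$; Claim~\ref{cl-single} lets you thin to $\ge 2a_0s$ with distinct $X_2$-ends; pigeonhole on $A$ gives $\ge 2s$ of them whose $X_1$-ends all share the same $a_v=z\in A$. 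Their $X_2$-ends form a set $U$ with $|U|\ge 2s$, and for any $U'\subseteq U$, $K\subseteq E(G)$ with $k|U'|+|K|<s$ you can route from $z$ to $U$ avoiding $U'$ and $K$ --- contradicting Theorem~\ref{thm-sep} applied at $z$. This is the second place where a single application of Theorem~\ref{thm-sep} (now from a vertex of $A$) replaces your proposed iteration.
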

\begin{proof}
Let $s=d(k)|V(F)|$, $w_0=ms^3+s^2$ and $w=\max\{2a_0sw_0,w_0+a_0s\}$.

\claim{cl-single}{For $i\in\{1,2\}$, every vertex $x\in X_i$ is separated from $X_{3-i}$ by an edge-cut of size less than $w_0$ in $G-A$.}
\begin{proof}
Suppose the claim is false.  By symmetry, we can assume that $i=1$.  Apply Theorem~\ref{thm-sep} in $G$ for $x$
and $X_2$, obtaining sets $Y\subseteq X_2$ and
$K_0\subseteq E(G)$, where $k|Y|+|K_0|<s$, such that the component of
$(G-Y)-K_0$ that contains $x$ does not contain any vertex of $X_2$.  For each
$y\in Y$, apply Theorem~\ref{thm-sep} for $y$ and $X_1$, obtaining sets
$Y_y\subseteq X_1$ and $K_y\subseteq E(G)$, where $k|Y_y|+|K_y|<s$, such
that the component of $(G-Y_y) - K_y$ that contains $y$ does not contain any
vertex of $X_1$.  Let $K=K_0\cup\bigcup_{y\in Y} K_y$ and let
$Z=\bigcup_{y\in Y} Y_y$, and note that $|K|\le s^2$ and $|Z|\le s^2$.

By Menger's theorem, there exists a set $S_0$ of $w_0$ pairwise edge-disjoint paths from $x$ to $X_2$ in $G-A$.
Let $S\subseteq S_0$ consist of the paths that do not contain edges of $K$;
we have $|S|\ge w_0-s^2$.  Consider a path $P\in S$.  Let $v_0$, $v_1$,
\ldots, $v_t$ be the vertices of $P$ in order, where $v_0=x$ and $v_t\in X_2$.
As the component of $(G-Y)-K_0$ that contains $x$ does not contain any vertex of
$X_2$, the vertex $v_t$ belongs to $Y$.  Let $j$ be the largest index such that
$v_j$ belongs to $X_1$.  As the component of $(G-Y_{v_t}) - K_{v_t}$
that contains $v_t$ does not contain any vertex of $X_1$, it follows that $v_j$
belongs to $Y_{v_t}\subseteq Z$.  Consequently, $G-A$ contains a set of
$|S|$ pairwise edge-disjoint paths joining vertices of $Y$ with vertices of
$Z$ and otherwise disjoint from $W$.  By the pigeonhole principle, there
exist vertices $u\in Z\setminus A\subseteq X_1$ and $v\in Y\setminus A\subseteq X_2$ contained in at least
$\frac{|S|}{|Y||Z|}\ge m$ of these paths, and thus $uv$ is an edge of $G(m,W)$.
This contradicts the assumption that $(X_1, X_2)$ is a sepearation of $G(m,W)-A$.
\end{proof}

Consider any vertex $v\in X_1$.  Since $W$ is $w$-edge-connected, $G$ contains at least $w$ pairwise edge-disjoint
paths from $v$ to $X_2$.  By \refclaim{cl-single}, at least $w-w_0+1$ of these paths pass through a vertex of $A$.
By pigeonhole principle, we have the following.
\claim{cl-connecta}{For every $v\in X_1$, there exists a vertex $a_v\in A$ such that $G-(X_2\cup (A\setminus \{a_v\}))$
contains at least $s$ pairwise edge-disjoint paths from $v$ to $a_v$.}

Suppose that the lemma is false and that $G-A$ does not contain an edge-cut of size less than $w$ separating $X_1$ from $X_2$.
By Menger's theorem, $G$ contains a set $S_1$ of $w$ pairwise edge-disjoint paths from $X_1$ to $X_2$ and otherwise
disjoint from $W$.  By \refclaim{cl-single}, each vertex of $X_2$ is incident with less than $w_0$ of these paths,
and thus we can select $S_2\subseteq S_1$ such that $|S_2|\ge |S_1|/w_0\ge 2a_0s$ and the paths in $S_2$ have pairwise distinct
ends in $X_2$.  Furthermore, we can select $z\in A$ and $S_3\subseteq S_2$ of size at least $|S_2|/a_0\ge 2s$ such that
every $v\in X_1$ incident with a path in $S_3$ satisfies $a_v=z$.

Let $U$ be the set of endpoints of the paths of $S_3$ in $X_2$ and note that $|U|=|S_3|\ge 2s$.
Consider any sets $U'\subseteq U$ and $K\subseteq E(G)$ such that $k|U'|+|K|<s$.
We have $|U'|<s$, and thus $|U\setminus U'|\ge s$.  Since $|K|<s$, there exists a path $P\in S_3$
ending in $U\setminus U'$ and disjoint with $K$.  Let $v$ be the endpoint of $P$ in $X_1$.  By \refclaim{cl-connecta},
there exists a path from $v$ to $z$ disjoint with $K$ and $U'$.  Therefore, $(G-U')-K$ contains a path
from $z$ to $U$.  Since this holds for every $U'\subseteq U$ and $K\subseteq E(G)$ with $k|U'|+|K|<s$,
we obtain a contradiction with Theorem~\ref{thm-sep}.
\end{proof}

Furthermore, avoiding a strong immersion of a fixed graph restricts the structure of $G(m,W)$, as long as
$m$ is large enough.

\begin{lemma}\label{lemma-nok1k}
Let $F$ and $G$ be graphs, let $W$ be a subset of $V(G)$ and let $m$ be an integer.  If $m\ge 2|E(F)|$
and $G$ does not contain $F$ as a strong immersion, then $G(m,W)$ does not contain $K_{1,|V(F)|}$ as a minor.
\end{lemma}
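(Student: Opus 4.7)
The plan is to prove the contrapositive: given a $K_{1,n}$ minor in $G(m, W)$ with $n = |V(F)|$ and $m \ge 2|E(F)|$, construct a strong immersion of $F$ in $G$. Let $B_0, B_1, \ldots, B_n \subseteq W$ be the branch sets of this minor; for each $i \ge 1$ choose $b_i \in B_i$ having a $G(m, W)$-neighbor $c_i \in B_0$, and fix any $v \in B_0$. Set $\theta(f_i) = b_i$ and $d_i = \deg_F(f_i)$, so $\sum_i d_i = 2|E(F)| \le m$. The first step is to show that, for each $i$, there exist $m$ pairwise edge-disjoint paths in $G$ from $b_i$ to $v$, all contained in $G'_i := G - (W \setminus (B_0 \cup \{b_i\}))$. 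This follows by chaining the $m$ edge-disjoint $x$-$y$ paths witnessing each $G(m, W)$-edge $xy$ along a $G(m, W)$-walk from $b_i$ through $c_i$ and a spanning tree of $B_0$ to $v$, using the standard min-cut observation that $m$ edge-disjoint $x$-$y$ paths in $H_1$ combined with $m$ edge-disjoint $y$-$z$ paths in $H_2$ give $m$ edge-disjoint $x$-$z$ paths in $H_1 \cup H_2$. Crucially, $G'_i$ excludes every $b_j$ with $j \neq i$, so these paths automatically avoid the other branch vertices.

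The core step is to simultaneously select $d_i$ of these paths for each $i$ so that the $\sum_i d_i$ selected paths are pairwise edge-disjoint in $G$. I would formulate this as a max-flow problem in a directed auxiliary graph $D$ with $V(D) = V(\hat G) \cup \{s, b_1^*, \ldots, b_n^*\}$, where $\hat G := G - (W \setminus B_0)$: replace each undirected $\hat G$-edge by two opposite directed edges of capacity $1$; add a directed edge $s \to b_i^*$ of capacity $d_i$; and for each $G$-edge $b_i w$ with $w \in V(\hat G)$ add a directed edge $b_i^* \to w$ of capacity $1$. The key property is that $b_i^*$ has no incoming edge from $V(\hat G)$, so any directed $s$-$v$ path in $D$ starts $s \to b_i^* \to w$ for a single $i$ and then stays within $\hat G$, corresponding in $G$ to a $b_i$-to-$v$ path lying in $G'_i$. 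A short cut argument shows the minimum $s$-$v$ cut in $D$ is at least $\sum_i d_i$: letting $I = \{i : b_i^*$ is on the $s$-side$\}$, the artificial $s \to b_i^*$ edges contribute $\sum_{i \notin I} d_i$; the remaining cut edges contain, for any fixed $i_0 \in I$, a $b_{i_0}$-$v$ edge cut of $G'_{i_0}$, hence contribute at least $m$; and since $m \ge \sum_{i \in I} d_i$, the total cut is at least $\sum_i d_i$. By max-flow/min-cut together with a standard flow cancellation of opposite-direction flow on each undirected $\hat G$-edge, we obtain $\sum_i d_i$ pairwise edge-disjoint (in the undirected sense) $G$-paths, with $d_i$ of them a $b_i$-to-$v$ path in $G'_i$.

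Finally, pair these $\sum_i d_i$ paths at $v$ according to $E(F)$: since $d_i$ equals the number of $F$-edges incident to $f_i$, a bijection assigns to each edge $f_i f_j \in E(F)$ one $b_i$-to-$v$ path and one $b_j$-to-$v$ path. Concatenating each pair at $v$ and shortening to a simple path yields the immersion path $\theta(f_i f_j)$. These $|E(F)|$ immersion paths are pairwise edge-disjoint (inherited from the max-flow) and each avoids every other branch vertex $b_k$ (since its two halves lie in $G'_i$ and $G'_j$, neither of which contains $b_k$ for $k \neq i, j$), so $\theta$ is a strong immersion of $F$ in $G$. The main obstacle is designing the max-flow auxiliary graph so that the resulting paths simultaneously respect the global edge-disjointness and the per-$i$ constraint of lying in $G'_i$; the directed-proxy construction above handles this via the asymmetric, outgoing-only edges at each $b_i^*$, which forbid source-$b_i$ flow from wandering through any other $b_j$.
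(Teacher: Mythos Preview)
Your proof is correct and follows the same approach as the paper: place the branch vertices of $F$ at one chosen vertex per leaf branch-set of the $K_{1,|V(F)|}$ minor, route $\deg_F(f_i)$ many pairwise edge-disjoint paths from each branch vertex to a common center in the hub branch-set, and pair these paths at the center to form $\theta(e)$. The paper compresses your entire second paragraph into the single clause ``by the definition of $G(m,W)$ and Menger's theorem'' to obtain the $2|E(F)|$ edge-disjoint $Z$-to-$c$ paths with prescribed multiplicities; your directed max-flow with outgoing-only proxies $b_i^*$ is exactly a rigorous unpacking of that step, and in particular is what enforces that the selected paths avoid the other branch vertices (the strong condition), which the paper leaves implicit.
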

\begin{proof}
The claim is trivial if $|V(F)|\le 1$.  Suppose that $|V(F)|\ge 2$ and that $G(m,W)$ contains $K_{1,|V(F)|}$ as a minor,
that is, $G(m,W)$ contains a subtree $T$ with $|V(F)|$ leaves and at least one non-leaf vertex.  Let $c$ be a non-leaf vertex of $T$ and let $Z$ be the
set of leaves of $T$.  Let $\theta$ be an injective function mapping $V(F)$ to $Z$.  By the definition of $G(m, W)$ and
Menger's theorem, there exists a set $S$ of $2|E(F)|$ pairwise edge-disjoint paths in $G$ from $Z$ to $c$, such that
every vertex $z\in Z$ is contained in exactly $\deg_F(\theta^{-1}(z))$ of these paths.  A \emph{half-edge} of $F$ is
a pair $(u,e)$, where $e$ is an edge of $F$ and $u$ is incident with $e$.  Note that there exists a bijective function $f$
from half-edges of $F$ to $S$ such that the path $f(u,e)$ contains the vertex $\theta(u)$ for every half-edge $(u,e)$.
We extend $\theta$ to a strong immersion of $F$ in $G$ by defining $\theta(e)=f(u,e)+f(v,e)$ for every edge $e=uv$ of $G$.
\end{proof}

The next lemma gives an approximate characterization of when a graph does not contain a large $K_{1,k}$ minor.   A set $X$ of vertices of a graph $G$ is a \emph{linearizing set} if $G-X$ is a vertex-disjoint union of paths.

\begin{lemma}[\cite{MW}]\label{lemma-deg3}
If a simple connected graph $G$ does not contain $K_{1,k}$ as a minor, then $G$ has a linearizing vertex set of size at most $4k$.
\end{lemma}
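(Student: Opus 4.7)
The plan is to reduce $G$ to its ``topological core'' and bound the size of that core. Let $H$ be the multigraph obtained from $G$ by iteratively suppressing every vertex of degree exactly $2$, i.e.\ by replacing each maximal chain $x_0{-}v_1{-}\cdots{-}v_r{-}x_{r+1}$ of internal degree-$2$ vertices by a single edge $x_0 x_{r+1}$. Since suppression is a minor operation, $H$ is a minor of $G$ and in particular contains no $K_{1,k}$ as a minor. The graph $H$ is connected, satisfies $V(H)\subseteq V(G)$, and each vertex of $H$ has either degree $1$ (and is a degree-$1$ vertex of the original $G$) or degree at least $3$. I would take $X:=V(H)$ as the linearizing set. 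Indeed, the vertices of $G-X$ are exactly the internal chain vertices that were suppressed, so $G-X$ induces a vertex-disjoint union of paths: any isolated cycle in $G-X$ would form a connected component of $G$ disjoint from $X$, which is impossible unless $X=\emptyset$, in which case $G$ itself is a single path or cycle and can be linearized by at most one vertex.

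It then suffices to show $|V(H)|\le 4k$. I would first bound the number of leaves of $H$. Suppose $H$ has at least $k$ leaves, and let $T$ be any spanning tree of $H$; every degree-$1$ vertex of $H$ is still a leaf of $T$, so $T$ has at least $k$ leaves. The non-leaf vertices of $T$ induce a connected subtree of $T$, and contracting that subtree in $H$ to a single vertex $c$ produces a graph in which $c$ is adjacent to every leaf of $T$, i.e.\ containing $K_{1,k}$ as a subgraph and hence as a minor of $H$, a contradiction. So $H$ has at most $k-1$ leaves.

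To bound the non-leaves of $H$, I would fix a spanning tree $T'$ of $H$. By the argument above $T'$ has at most $k-1$ leaves, and the standard tree identity that the number of vertices of degree $\geq 3$ is at most the number of leaves minus $2$ then gives at most $k-3$ tree-branching vertices. Every remaining vertex of $H$ has tree-degree exactly $2$ in $T'$ but $H$-degree at least $3$, so must be incident to at least one non-tree edge of $H$. The number of non-tree edges is the cyclomatic number $c(H)=|E(H)|-|V(H)|+1$, and each edge is incident to at most two vertices, so at most $2c(H)$ vertices can be of this type. Combining,
\[
|V(H)|\le (k-1)+(k-3)+2c(H)=2k-4+2c(H).
\]

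The hardest step will be to bound $c(H)$ by roughly $k$, which with the above yields $|V(H)|\le 4k$. My plan for this is to show that a connected multigraph with large cyclomatic number, minimum non-leaf degree at least $3$, and at most $k-1$ leaves must itself contain $K_{1,k}$ as a minor. Concretely, I would take an open ear decomposition of each $2$-edge-connected block of $H$ and use the spanning tree $T'$ to route $c(H)$ internally edge-disjoint connecting paths from a single, well-chosen branching vertex to the interiors of the successive ears; contracting each ear together with its connecting path into a single branch set then realizes the ears as $c(H)$ pairwise disjoint connected subgraphs, each adjacent to the chosen vertex, giving $K_{1,c(H)}$ as a minor of $H$. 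Ensuring the edge-disjointness of this routing across the block-cut structure of $H$, and handling the case that $H$ is not $2$-edge-connected so that ears must be coordinated across several bridges, is the technical heart of the argument.
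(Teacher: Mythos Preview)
The paper does not prove this lemma; it is quoted from \cite{MW}. So there is no in-paper argument to compare against, and the question is simply whether your plan works.

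Your reduction to the topological core $H$ and the bound $|V(H)|\le 2k-4+2c(H)$ are fine, but the final step is wrong as stated. You propose to show that \emph{any} connected multigraph with minimum non-leaf degree at least $3$, at most $k-1$ leaves, and large cyclomatic number contains $K_{1,k}$ as a minor. This is false. For $k=7$, take the ``thick path'' $H$ on vertices $v_1,\dots,v_n$ with three parallel edges between each consecutive pair. Then $H$ has minimum degree $3$, maximum degree $6<k$, no leaves, and $c(H)=2(n-1)$, yet every connected vertex subset of $H$ is an interval and is adjacent to at most two other disjoint intervals, so $H$ does not even contain $K_{1,3}$ as a minor. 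Thus no argument that works purely inside $H$ can bound $c(H)$.

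What you are throwing away is that the original \emph{simple} graph $G$, not just its minor $H$, excludes $K_{1,k}$. In the thick-path example, any simple $G$ that suppresses to $H$ must subdivide all but one of each bundle of parallel edges, and then contracting a spanning path of $H$ inside $G$ produces a vertex adjacent to $2(n-1)$ distinct subdivision vertices, so $G$ contains $K_{1,2(n-1)}$ as a minor. A correct argument has to exploit this: the parallel edges of $H$ correspond to internally vertex-disjoint paths in $G$, whose interior vertices give extra branch sets for a star minor in $G$. Your ear-decomposition sketch, which routes only within $H$, cannot see these vertices and therefore cannot succeed without being rewritten to live in $G$.
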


We now give the proof of Theorem \ref{thm:mainstrong}.
\begin{proof}[Proof of Theorem~\ref{thm:mainstrong}]
Let $m=2|E(F)|$, $a=4|V(F)|$, $a_0=a+1$ and let $w$ be the corresponding constant from Lemma~\ref{lemma-vcon}.
Let $k=\max(\Delta(F),3)$ and $p=3d(k)|V(F)|+1$.

Let $W$ be a $w$-edge-connected subset of $V(G)$, and let $H=G(m,W)$.  If $H$ is not connected, then there exists a separation $(X_1, X_2)$ of $H$.  By Lemma~\ref{lemma-vcon} applied with $A=\emptyset$,
it follows that $G$ contains an edge-cut of size less than $w$ separating $X_1$ from $X_2$.  This is a contradiction, since $W$ is
$w$-edge-connected.

Therefore, $H$ is a connected simple graph, and by Lemma~\ref{lemma-nok1k}, $H$ does not contain $K_{1,|V(F)|}$ as a minor.
Let $A$ be the smallest linearizing set in $H$.  By Lemma~\ref{lemma-deg3}, we have $|A|\le a$.
Let $x_1$, $x_2$, \ldots, $x_t$ be an ordering of $W\setminus A$ such that for $2\le i\le t-1$, the neighbors of $x_i$ in $H-A$ are contained
in $\{x_{i-1}, x_{i+1}\}$.

If $t=0$, we set $B_0=V(G)\setminus A$.  If $t=1$, we set $B_0=\emptyset$ and $B_1=V(G)\setminus (A\cup \{x_1\})$.  If $t=2$, we set
$B_0=B_2=\emptyset$ and $B_1=V(G)\setminus (A\cup \{x_1,x_2\})$.  In all the cases, we obtain a path-like decomposition of $G-A$
with respect to $W\setminus A$ of width $0$, and since $p\ge 3$, the neigborhood of every vertex of $A$ is $p$-bounded.  Therefore, assume that $t\ge 3$.

Consider an index $i$ such that $2\le i\le t-1$ and let $U_i=\{x_1,\ldots, x_{i-1}\}$ and $V_i=\{x_{i+1},\ldots, x_t\}$.
A set $Z\subset V(G)\setminus A$ is an \emph{$i$-separator} if $x_i\not\in Z$, $U_i\subseteq Z$ and $V_i\cap Z=\emptyset$.
We set $s_i(Z)$ to be the number of edges of $G-A$ with one end in $Z$ and the other end in $V(G-A)\setminus (Z\cup \{x_i\})$.
Let $L_i\subset V(G)\setminus A$ be an $i$-separator with $s_i(L_i)$ as small as possible, and subject to that with $|L_i|$ minimal.
By Lemma~\ref{lemma-vcon} applied with $A\cup \{x_i\}$, we have $s_i(L_i)<w$.

\claim{cl-subset}{If $2\le i<j\le t-1$, then $L_i\subset L_j$.}
\begin{proof}
Consider indices $i$ and $j$ such that $2\le i<j\le t-1$.  Let $N_i=L_i\cap L_j$ and $N_j=L_i\cup L_j$.  Note that $N_i$ is an $i$-separator
and $N_j$ is a $j$-separator, and thus $s_i(N_i)\ge s_i(L_i)$ and $s_j(N_j)\ge s_j(L_j)$.  Observe that
$s_i(L_i)+s_j(L_j)-(s_i(N_i)+s_j(N_j))=2b_1+b_2$, where $b_1$ is the number of edges with one end in $L_j\setminus (L_i\cup \{x_i\})$ and the other
end in $L_i\setminus L_j$, and $b_2$ is the number of edges incident with $x_i$ or $x_j$ and with the other end in $L_i\setminus L_j$.
Consequently, $s_i(L_i)+s_j(L_j)\ge s_i(N_i)+s_j(N_j)$.  Putting the inequalities together, we have
$s_i(N_i)=s_i(L_i)$ and $s_j(N_j)=s_j(L_j)$.  Since $L_i$ is chosen with $|L_i|$ minimal, it follows that $|N_i|\ge |L_i|$.
Therefore, $L_i\subseteq L_j$.  Furthermore, the inclusion is sharp, since
$x_i\in L_j\setminus L_i$.
\end{proof}

Let $L_1=\emptyset$ and $L_t=V(G)\setminus (A\cup \{x_t\})$.
Let $B_0=B_t=\emptyset$, and for $1\le i\le t-1$, let us set $B_i=L_{i+1}\setminus (L_i\cup\{x_i\})$.
By \refclaim{cl-subset}, $B_0$, \ldots, $B_t$ is a near-partition of $V(G)\setminus W$, and thus the ordering $x_1$, \ldots, $x_t$
and the sets $B_0$, \ldots, $B_t$ form a path-like decomposition $\PP$ of $G-A$ with respect to $W\setminus A$.
Since $s_i(L_i)<w$ for $2\le i\le t-1$, the width of $\PP$ is less than $w$.

\claim{cl-comp}{For $1\le i\le t-2$, each component of $G[B_i]$ contains a neighbor
of $x_i$ or $x_{i+1}$.}
\begin{proof}
Suppose that $C$ is the vertex set of a component of $G[B_i]$ containing neighbors of neither $x_i$ nor $x_{i+1}$.
We say that an edge of $G-A$ with one end in $C$ and the other end not in $C$ is \emph{backward} if its end not in $C$
belongs to $L_{i+1}$, and it is \emph{forward} otherwise.  Note than neither forward nor backward edges are incident with $x_{i+1}$.

Let $L'_{i+1}=L_{i+1}\setminus C$ and note that $L'_{i+1}$ is an $(i+1)$-separator.
Since $|L'_{i+1}|<|L_{i+1}|$, the choice of $L'_{i+1}$ implies that $s_{i+1}(L'_{i+1})>s_{i+1}(L_{i+1})$, and thus
there are more backward edges than forward ones.  Since $B_i=L_{i+1}\setminus (L_i\cup\{x_i\})$ and $C$ induces a component
of $G[B_i]$ containing no neighbors of $x_i$, all backward edges are incident with vertices in $L_i$.
Since $L_1=\emptyset$, we have $i\ge 2$.  However, then $L'_i=L_i\cup C$ is an $i$-separator with $s_i(L'_i)<s_i(L_i)$,
which is a contradiction.
\end{proof}

To complete the proof, we must show that every vertex of $A$ is $p$-bounded in $\PP$.  Suppose that the neighborhood of a vertex $c\in A$ is not $p$-bounded.
By \refclaim{cl-comp}, $G$ contains at least $(p-1)/3=d(k)|V(F)|$ paths from $c$ to vertices of $W\setminus A$ whose vertex sets pairwise intersect
only in $c$.  Let $X$ denote the set of their endpoints in $W\setminus A$.  Suppose that sets $Y\subseteq X$ and
$K\subseteq E(G)$ have the property that the component of $(G-Y)-K$ that contains $c$ does not contain any vertex of $X$.
Then each of the paths from $c$ to $X$ contains either a vertex of $Y$ or an edge of $K$, and thus
$k|Y|+|K|\ge d(k)|V(F)|$.  This contradicts Theorem~\ref{thm-sep}.
\end{proof}

\section{An approximate converse}\label{sec:converse}

We conclude by showing that the decomposition guaranteed by Theorem \ref{thm:mainstrong} does indeed give a good approximation of graphs excluding strong clique immersions.  

\begin{theorem}\label{thm-converse}
For all integers $d$, $a$, $w$ and $p$, there exists an integer $n$ such that
if every $d$-edge-connected subset of $V(G)$ is $(a,w,p)$-linear, then $G$ does not contain $K_n$ as a strong immersion.
\end{theorem}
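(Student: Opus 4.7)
The plan is to prove the contrapositive: if $G$ strongly immerses $K_n$ for $n$ sufficiently large in terms of $d$, $a$, and $w$, then the set $W$ of branch vertices is $d$-edge-connected but cannot be $(a,w,p)$-linear for any $p$, contradicting the hypothesis. Let $\theta$ be a strong immersion of $K_n$ in $G$ and set $W=\theta(V(K_n))$. A standard concatenation argument shows that $W$ is $(n-1)$-edge-connected: for any $x,y\in W$, the $n-1$ edge-disjoint subgraphs $\theta(xy)$ and $\theta(xz)\cup\theta(zy)$ (for $z\in W\setminus\{x,y\}$) are pairwise edge-disjoint and each contains an $xy$-path, so by Menger's theorem the minimum $xy$-edge-cut has size at least $n-1$. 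Hence, provided $n\ge d+1$, the hypothesis produces a set $A\subseteq W$ with $|A|\le a$ and a path-like decomposition $\PP$ of $G-A$ with respect to $W\setminus A=\{x_1,\ldots,x_t\}$ of width less than $w$, where $t\ge n-a$.

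The heart of the argument is a quadratic lower bound on the middle cut of $\PP$. The strong immersion property guarantees that for any $x_i,x_j\in W\setminus A$, the subgraph $\theta(x_ix_j)$ contains no branch vertex other than $x_i$ and $x_j$; since every vertex of $A$ is a branch vertex, $\theta(x_ix_j)$ lies entirely in $G-A$. Choose $l=\lceil t/2\rceil$. For each pair $1\le i<l<j\le t$, the connected subgraph $\theta(x_ix_j)$ contains $x_i\in\{x_1,\ldots,x_{l-1}\}$ and $x_j\in\{x_{l+1},\ldots,x_t\}$ but not $x_l$, so it must include at least one edge of the $x_l$-cut of $\PP$. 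Since the subgraphs $\theta(x_ix_j)$ are pairwise edge-disjoint, the $x_l$-cut has at least $(l-1)(t-l)\ge(t-2)^2/4$ edges, and comparing to the width bound $<w$ forces $(t-2)^2/4<w$, so $t<2\sqrt{w}+2$. Consequently $n\le a+t<a+2\sqrt{w}+2$.

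Taking $n$ to be any integer strictly exceeding $\max(d,\,a+2\sqrt{w}+2)$ therefore completes the contrapositive. I don't anticipate any real obstacle: the argument is a clean quantitative converse, and it does not even use the $p$-boundedness condition on neighborhoods of $A$, since the width bound alone already provides the obstruction. The only step requiring a little care is the passage from edge-disjoint connected subgraphs to edge-disjoint paths that underlies the verification that $W$ is $(n-1)$-edge-connected.
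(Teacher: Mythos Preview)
Your argument is correct, and it takes a genuinely different route from the paper's own proof.

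The paper chooses $n=\max\{2w+2,(2p+1)a+1,d+1\}$ and argues as follows: using the $p$-boundedness of the neighborhoods of $A$, one can find an index $i$ such that no vertex of $A$ has a neighbor in $\{x_i\}\cup B_{i-1}\cup B_i$. Then every edge of $K_n$ incident with $\theta^{-1}(x_i)$ (other than the two going to $\theta^{-1}(x_{i-1})$ and $\theta^{-1}(x_{i+1})$) forces its image to cross the union of the $x_{i-1}$-cut and the $x_{i+1}$-cut, giving $n-3\le 2(w-1)$. The $p$-boundedness is needed precisely to control the images of edges from $x_i$ to vertices of $A$.

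Your approach sidesteps this entirely: by restricting attention to the $\binom{t}{2}$ edges of $K_n$ whose \emph{both} endpoints lie in $W\setminus A$, the strong-immersion condition already guarantees that their images avoid $A$, so they live in $G-A$ and the width bound applies directly. Counting only the pairs that straddle the middle vertex $x_l$ yields roughly $t^2/4$ edge-disjoint crossings of a single cut, whence $t<2\sqrt{w}+2$. This buys you two things: the value of $n$ no longer depends on $p$ (so you prove a formally stronger statement), and the dependence on $w$ improves from linear to square-root. The paper's approach, on the other hand, is slightly more local --- it uses only two adjacent cuts rather than a global count --- and makes explicit how the $p$-boundedness hypothesis enters, which matches the paper's interest in exhibiting that all three parameters $a,w,p$ are relevant to the structure.
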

\begin{proof}
Let $n=\max\{2w+2,(2p+1)a+1,d+1\}$, and suppose that $\theta$ is a strong immersion of $K_n$ in $G$.  Let $V$ be the vertex set of $K_n$ and let $W=\theta(V)$.
Since $n-1\ge d$, $W$ is $d$-edge-connected in $G$, and thus it is $(a,w,p)$-linear.  Let $A$ be a subset of $W$ of size at most $a$
such that $G-A$ has a path-like decomposition $\PP$ with respect to $W\setminus A$ of width less than $w$
and the neigborhood of every vertex of $A$ is $p$-bounded in $\PP$.  Let $x_1$, \ldots, $x_m$ be the ordering of $W\setminus A$
according to $\PP$ and let $B_0$, \ldots, $B_m$ be the bags of $\PP$.  Since $m\ge n-a\ge 2|A|p+1$ and the neighborhood of every
vertex of $A$ in $G-A$ is $p$-bounded in $\PP$, there exists an index $1\le i\le m$ such that the vertices of $A$ have no neighbors
in $\{x_i\}\cup B_{i-1}\cup B_i$.  Let $K$ be the union of the $x_{i-1}$-cut and the $x_{i+1}$-cut of $\PP$.
Consider an edge $e$ of $K_n$ incident with $\theta^{-1}(x_i)$ such that $e$ is incident neither
with $\theta^{-1}(x_{i-1})$ nor $\theta^{-1}(x_{i+1})$.  Since the immersion is strong, $\theta(e)$ contains neither $x_{i-1}$ nor $x_{i+1}$,
and thus $\theta(e)$ contains an edge of $K$.  Therefore, there are at most $|K|\le 2(w-1)$ such edges incident with $x_i$, which is a contradiction
since $n\ge 2w+2$.
\end{proof}

Similarly, the structure described in Theorem~\ref{thm:weakdecomp2} is sufficient to exclude a large clique as a strong immersion.

\begin{theorem}\label{thm-convstr}
For every integer $\alpha\ge 1$ there exists an integer $n\ge 1$ such that 
if a graph $G$ has a tree-cut decomposition $(T, \zX)$ of adhesion
less than $\alpha$ such that each torso is $\alpha$-basic,
then $G$ does not contain $K_n$ as a strong immersion.
\end{theorem}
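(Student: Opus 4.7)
The plan is to localize any hypothetical strong immersion of $K_n$ in $G$ to a single torso of the tree-cut decomposition, obtain a strong $K_n$-immersion there, and then contradict Theorem~\ref{thm-converse} applied inside that torso using its $\alpha$-basic structure.

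First I would show that for $n\ge 2\alpha$, every strong immersion $\theta$ of $K_n$ in $G$ has all $n$ branch vertices in a single bag $X_{t^*}$ of $(T,\zX)$. For any edge $uv$ of $T$, the corresponding edge-cut of $G$ has size strictly less than $\alpha$; if $a$ branch vertices lie on the $u$-side and $n-a$ on the $v$-side, the $a(n-a)$ edge-disjoint paths of $\theta$ joining pairs across this cut each use an edge of the cut, hence $a(n-a)<\alpha$ and $a\in\{0,n\}$. Thus every edge of $T$ has all branches on one side, and since any two bags $X_t,X_{t'}$ both containing branches would be separated by some edge of $T$ with branches on both sides, all branches lie in a common bag $X_{t^*}$.

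Next I would project $\theta$ to the torso $H^*$ at $t^*$. For each edge $e$ of $K_n$, let $\theta'(e)$ be the set of edges of $\theta(e)$ that are not contained in any consolidated region $Z_i=\bigcup_{s\in V(T_i)}X_s$, read as edges of $H^*$. Traversing $\theta(e)$ and contracting each maximal sub-walk inside some $Z_i$ to the peripheral vertex $z_i$ produces a connected walk in $H^*$ with the correct endpoints, so $\theta'(e)$ is a connected subgraph of $H^*$ joining them. Edge-disjointness transfers from $\theta$ because edges outside the $Z_i$'s correspond bijectively to edges of $H^*$, and the strong property transfers because all branch vertices sit in $X_{t^*}$ and are thus already avoided by $\theta(e)$ (hence by $\theta'(e)$) except at its endpoints. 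This yields a strong immersion of $K_n$ in $H^*$.

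Finally I would apply Theorem~\ref{thm-converse} inside $H^*$. Since $H^*$ is $\alpha$-basic, the set $W'$ of vertices of degree at least $\alpha$ is $\alpha$-linear, witnessed by some $A'\subseteq W'$ and path-like decomposition $\PP'$. For $n\ge \alpha+1$, every branch vertex of $\theta'$ has degree at least $n-1\ge\alpha$ in $H^*$, so the branch vertex set $V$ is contained in $W'$ and is $(n-1)$-edge-connected in $H^*$. Starting from $(A',\PP')$ I would construct an $(a_0,w_0,p_0)$-linear decomposition of $V$, with $a_0,w_0,p_0$ bounded by a function of $\alpha$, by setting $A_V:=A'\cap V$, restricting the ordering of $\PP'$ to $V\setminus A_V$ and merging the intervening bags together with the $x_i$'s of $\PP'$ not in $V$, and then inserting each $a\in A'\setminus V$ into a bag adjacent to one of the at most $\alpha$ positions of $\PP'$ touched by $N(a)$. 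Applying Theorem~\ref{thm-converse} with $d:=\alpha$ and these parameters then produces an integer $n=n(\alpha)$ above which no strong immersion of $K_n$ in $H^*$ can exist, contradicting the immersion from the previous step. The main difficulty is the bookkeeping in this last construction: an apex vertex $a\in A'\setminus V$ pushed from the apex set into a bag contributes its (possibly many parallel) edges to the cut widths of the new decomposition, threatening to drive $w_0$ beyond any function of $\alpha$. The proof must use the full $\alpha$-boundedness of $N(a)$ in $\PP'$, not merely that $|A'|\le\alpha$, by clustering each such $a$ at the bag containing one of its neighbourhood positions so that each individual $y$-cut receives only a controlled contribution from these reabsorbed vertices, and similar care is required to preserve the $p$-boundedness of the neighbourhoods of $A_V$ in the merged-and-augmented decomposition.
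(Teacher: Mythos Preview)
Your first two steps---localizing all branch vertices to a single bag $X_{t^*}$ and projecting the strong immersion to the torso $H^*$---are correct and match the paper's argument.

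The gap is in your third step. The paper does not invoke Theorem~\ref{thm-converse} as a black box; it simply reruns that theorem's short counting argument inside $H^*$, using the \emph{given} path-like decomposition $\PP'$ of the set $W'$ of high-degree vertices. Since every branch vertex already lies in $W'$, one picks a branch vertex $x_i$ in the ordering of $W'\setminus A'$ that is far from the neighbourhoods of $A'$ and bounds the number of $K_n$-edges at $\theta^{-1}(x_i)$ by two nearby cuts of $\PP'$, each of size $<\alpha$. Your plan instead manufactures a \emph{new} path-like decomposition for $V$ by merging bags and reabsorbing each $a\in A'\setminus V$ into a bag, and this is where it fails: the $\alpha$-boundedness of $N(a)$ in $\PP'$ limits how many \emph{positions} the neighbours of $a$ occupy, but says nothing about the number of \emph{edges} incident with $a$. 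If $a$ carries $M$ parallel edges to a vertex on the far side of the ordering, then whichever bag receives $a$, some $y$-cut of your merged decomposition gains on the order of $M$ edges, and $M$ is not bounded by any function of $\alpha$. Your ``controlled contribution'' is therefore not controlled, and $w_0$ cannot be bounded in terms of $\alpha$. (There is also a formal mismatch: the hypothesis of Theorem~\ref{thm-converse} concerns \emph{every} $d$-edge-connected set, not one specific set, so even a successful construction for $V$ alone would force you to reopen the proof rather than cite the theorem.) Keeping $A'$ intact as the apex set and working directly with $\PP'$, as the paper does, sidesteps this entirely because the vertices of $A'\setminus V$ never contribute to any cut.
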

\begin{proof}
Let $n=2\alpha^2+2\alpha+1$, and suppose that $\theta$ is a strong immersion of $K_n$ in $G$.  Let $V$ be the vertex set of $K_n$ and let $W=\theta(V)$.
The set $W$ is $\alpha$-edge-connected in $G$, and since $(T, \zX)$ has adhesion less than $\alpha$, we conclude that there exists $t\in V(T)$
such that $W\subseteq X_t$.  Let $H$ be the torso of $(G, T, \zX)$ at $t$, and observe that $H$ also contains $K_n$ as a strong immersion.
However, we can now obtain a contradiction in the same way as in the proof of Theorem~\ref{thm-converse}.
\end{proof}

\bibliographystyle{siam}
\bibliography{strongimm}

\end{document}